\newcommand{\Ind}{
 \setbox0=\hbox{$x$}\kern\wd0\hbox to 0pt{\hss$
 \mid$\hss}\lower.9\ht0\hbox to 0pt{\hss$\smile$\hss}\kern\wd0
}
\newcommand{\Notind}{
 \setbox0=\hbox{$x$}\kern\wd0\hbox to 0pt{\mathchardef
 \nn=12854\hss$\nn$\kern1.4\wd0\hss}\hbox to 0pt{\hss$\mid$\hss}\lower.9\ht0
 \hbox to 0pt{\hss$\smile$\hss}\kern\wd0
}
\newtheorem{satz}{Theorem}[section]
\newtheorem{lemma}[satz]{Lemma}
\newtheorem{proposition}[satz]{Proposition}
\newtheorem{definition}[satz]{Definition}
\newtheorem{remark}[satz]{Remark}
\newtheorem{Definition}[satz]{Definition}
\newtheorem{Remark}[satz]{Remark}
\newtheorem{Theorem}[satz]{Theorem}
\newtheorem{cor}[satz]{Corollary}
\newtheorem{prop}[satz]{Proposition}
\newtheorem{thm/def}[satz]{Theorem/Definition}
\newcommand{\nc}{\newcommand}
\nc{\sa}{semialgebraic\xspace}
\nc{\el}{elementary\xspace}
\nc{\low}{lower \el}
\nc{\inv}[1]{\frac{1}{#1}}
\nc{\G}{\Gamma}
\nc{\Np}{\N_{\scriptscriptstyle >0}}
\nc{\Z}{\mathbb{Z}}
\nc{\Q}{\mathbb{Q}}
\nc{\N}{\mathbb{N}}
\nc{\Rp}{\R_{\scriptscriptstyle >0}}
\nc{\C}{\mathbb{C}}
\nc{\F}{\ensuremath{\mathcal{F}}\xspace}
\nc{\K}{\mathcal{K}}
\nc{\U}{\mathbb{U}}
\nc{\fps}{free pseudospace\xspace}
\nc{\fpsn}{free pseudospace of dimension $n$\xspace}
\nc{\fpsk}{free pseudospace of dimension $n-1$\xspace}
\nc{\E}{\mathbb{E}}
\nc{\Epsilon}{{\Large $\epsilon$}} %Funktioniert nicht im Mathmode
\nc{\ap}{approximable\xspace}
\nc{\e}{\mathrm{e}}
\nc{\ii}{\,\mathrm{i}}
\nc{\Es}{\E\setminus\R_{\scriptscriptstyle\leq0}}
\DeclareMathOperator{\Aut}{Aut}
\DeclareMathOperator{\cl}{cl}
\DeclareMathOperator{\acl}{acl}
\renewcommand{\G}{\Gamma}
\newcommand{\leqs}{\leqslant}
\DeclareMathOperator{\dist}{dist}
\DeclareMathOperator{\Autf}{Autf}
\DeclareMathOperator{\Bdd}{Bdd}
\title{New simple groups with a BN-pair}
\author{Z. Ghadernezhad \& K. Tent}
\date{\today}
\begin{document}

\maketitle

\begin{abstract}

We show that there are simple groups with a spherical BN-pair of rank $2$
which are non-Moufang and hence not of algebraic origin.

\end{abstract}

\section{Introduction}

It was shown in \cite{TVM,Te2} that 
any group with a \emph{split} BN-pair of rank at least~2 is essentially a simple
algebraic group. While it was known that there
are non-algebraic groups with a spherical BN-pair (see e.g. \cite{TitsBN} or \cite{TentFree}), none of the examples
of  such groups were known to be abstractly simple.
Thus one might wonder whether any simple group
with a BN-pair is algebraic. We here show that this is not the case:
there exist \emph{simple non-algebraic}
groups with a (non-split) BN-pair of rank $2$. 
The result relies
on the construction of very homogeneous generalized polygons by the second author given
in \cite{Tent} and a result of Lascar's \cite{Lascar} which can be
applied to these generalized polygons. It implies that a certain subgroup
of the automorphism group has a simple quotient. Our main task will
be to show that in this case the automorphism group itself is a simple group. The existence of a BN-pair for this group was already established in \cite{Tent}. While these results in the background are model theoretic,
the proof here is essentially geometric.

We try to keep the paper accessible to the non-model theorist.
To this end, we will give the model theoretic definitions and tools
adjusted to this specific situation
rather than going into the general model theoretic context.

It would be interesting to prove similar results for the higher rank case
or for non-spherical buildings.
Note that spherical buildings of rank at least $3$ always arise from
standard BN-pairs in (essentially simple) algebraic groups. However, also in this case there
are non-algebraic groups acting on these with a BN-pair. Whether or not
these groups could be simple, is still open.

Note that Caprace \cite{C} (and independently the second author) 
proved that the 
group of type-preserving automorphisms of any irreducible semi-regular thick right-angled building is abstractly simple. 

\section{Generalized polygons and spherical BN-pairs of rank $2$}

Recall that a generalized $n$-gon is a bipartite graph $\Gamma$ of diameter $n$ and
girth $2n$. It is called \emph{thick} if all valencies are at least $3$.
We say that a group $G$ acts \emph{strongly transitively} on $\Gamma$
if $G$ acts transitively on the set of $2n$-cycles  of $\Gamma$ with chosen starting point
of fixed type or, equivalently,
if
for any simple path $\gamma=(x_0,\ldots, x_n)$ in $\Gamma$ the pointwise stabilizer $G_\gamma$
acts transitively on $D_1(x_n)\setminus\{x_{n-1}\}$ where $D_i(x)$ denotes the set of elements of $\Gamma$ at distance $i$    
from $x$.

We need the following fact due to Tits which  will serve us as a definition:

\begin{thm/def}
A group $G$ has a spherical BN-pair of rank $2$ if and only
if there is a generalized $n$-gon $\Gamma$ and a strongly transitive action of $G$ on $\Gamma$.
\end{thm/def}

The generalized $n$-gons constructed in \cite{Tent} are \emph{almost strongly minimal} structures and
their automorphism groups have a BN-pair. In fact, the automorphism groups of these $n$-gons
act even transitively on ordered $(2n+2)$-cycles starting in a fixed class of vertices:

\begin{remark}\cite{HVM}
If $\Gamma$ is a generalized $n$-gon, then a group $G$ acts transitively on the set
of ordered $(2n+2)$-cycles if and only if $G$ acts transitively on the set of ordered
$2n$-cycles and the stabilizer of a $2n$-cycle $(x_0,\ldots, x_{2n-1},x_{2n}=x_0)$
acts transitively on the set $(D_1(x_1)\setminus\{x_0,x_2\})\times(D_1(x_2)\setminus\{x_1,x_3\})$.
\end{remark}

Recall that a generalized $n$-gon satisfies the \emph{Moufang condition} if for any simple path $\gamma=(x_0,\ldots, x_n)$ in $\Gamma$ the  stabilizer $\bigcap_{i=1}^{n-1} G_{D_1(x_i)}$
acts transitively on $D_1(x_n)\setminus\{x_{n-1}\}$.

By the classification of Moufang polygons due to Tits and Weiss \cite{TW}
any Moufang polygon arises from the standard BN-pair of an essentially
simple algebraic group. As explained below, the examples of \cite{Tent} do not satisfy the Moufang
condition.

For more background on generalized polygons we refer the reader to \cite{HVM}.

\section{Construction of very homogeneous generalized $n$-gons}

We first recall the
construction of the very homogeneous generalized $n$-gons given
in \cite{Tent} (see also \cite{TZ}, Sec.10.4).
Fix $n\geq 3$. For a finite graph $A$ we define  \[\delta(A)=(n-1)|A|-(n-2) e(A)\]
where $e(A)$ denotes the number of edges between vertices of $A$.
If $A,B$ are finite subgraphs of a given graph, we write $AB$ for $A\cup B$.
Similarly if $b$ is a single point we write $Ab$ for $A\cup\{b\}$. 
For finite subgraphs $A,B$ of a given graph we put
\[\delta(A/B)=\delta(AB)-\delta (B).\]

If $A,B$ are disjoint we have 
\[\delta(A B)=\delta(A)+\delta(B)- (n-2)e(A,B),\] 
where $e(A,B)$ denotes the number of edges between vertices of $A$ and vertices
of $B$. Therefore for disjoint $A,B$ we have
 $\delta(A/B)=\delta(A)-(n-2)e(A,B)$.
 
For graphs $A\subseteq B$ with $A$ finite, we say that $A$ is \emph{strongly embedded into} $B$, and write $A\leqslant B$ if $\delta(B')\geq \delta(A)$ for all $A\subseteq B'\subseteq B, B'$ finite.

\begin{Definition}
 Suppose $A$ and $B$ are disjoint finite subgraphs of a given graph. Then 
 $B$ is called $0$-\textit{algebraic} over $A$ 
if $\delta(B/A)=0$ and  $\delta(B'/A)>0$ for any proper nonempty subset $B'\subset B$. 
The set $B$ is called $0$-\textit{minimally algebraic} over $A$ if there is no proper subset $A'$ of $A$ such that $B$ is $0$-algebraic over $A'$.
\end{Definition}

\begin{Remark}\begin{enumerate}
\item If $B$ is $0$-algebraic over $A$, then there is a unique $A'\subseteq A$ such that $B$ is $0$-minimally algebraic over $A'$, namely  $A'=\{a\in A: e(a,B)\geq 1\}$.
\item If $B$ is $0$-algebraic over $A$,  then clearly
\[ |B|(n-1)=(n-2)(e(B)+e(B,A)).\]
\item for a path $\{a=x_0,x_1,\ldots,x_m,x_{m+1}=b\}$ of length $m+1$, the set $\{x_1,\dots,x_m\}$ is $0$-minimally algebraic over $\{a,b\}$ if and only if $m=n-2$. 
\end{enumerate}
\end{Remark}

We fix a function $\mu$ from the set of pairs $(A,B)$ where $B$ is $0$-minimally algebraic over $A$ into the natural numbers with the following properties:
\begin{enumerate}
	\item If $ (A,B)$ and  $(A',B')$ have the same isomorphism type then $\mu(A,B)=\mu(A',B')$.
	\item If $A=\{a,b\}$ and $B$ consists of a path of length $n-3$
	connecting $a$ and $b$, (so $AB$ is a path of length $n-1$), then $\mu(A,B)=1$; otherwise $\mu(A,B)\geq max\{\delta(A),n\}$.
\end{enumerate}

\begin{definition}\label{d:K}
Let $\mathbb{K}^\mu$ be the class of all finite graphs $C$, bipartite with respect to a predicate $P$ and satisfying the following conditions:
\begin{enumerate}
\item The graph $C$ contains no $2m$-cycle for $m<n$;
\item If $B\subseteq C$ contains a $2m$-cycle for $m>n$, then $\delta(B)\geq 2n+2$.
\item If $B$ is a $0$-minimally algebraic set over $A$ and $A,B\subset C$, then the number of copies of $B$ over $A$ inside $C$ is at most $\mu(A,B)$.
\end{enumerate} 

\end{definition}

The following was shown in \cite{Tent}: 
\begin{Theorem}(\cite{Tent}, Thm. 4.6)\label{t:limit}
There is a countable generalized $n$-gon $\Gamma_n$ such that every $C\in\mathbb K^\mu$ can be strongly embedded into $\Gamma_n$ and any isomorphism between $A,B\leq \Gamma_n$ extends to an automorphism of $\Gamma_n$.

In particular, the automorphism group of $\Gamma_n$ acts transitively on the set of ordered $(2n+2)$-cycles. \qed
\end{Theorem}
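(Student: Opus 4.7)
The plan is to construct $\Gamma_n$ as a Fra\"iss\'e--Hrushovski limit of the pair $(\mathbb{K}^\mu,\leq)$. First I would show that $(\mathbb{K}^\mu,\leq)$ is an amalgamation class: hereditariness under finite $\leq$-substructures is built into the definition, joint embedding is trivial, and the substantive point is amalgamation. Given $A\leq B$ and $A\leq C$ in $\mathbb{K}^\mu$ with $B\cap C=A$, the candidate amalgam $D$ is the free amalgam along $A$, introducing no edges between $B\setminus A$ and $C\setminus A$. A submodularity computation for $\delta$ then gives $B,C\leq D$, and conditions (1) and (2) of Definition~\ref{d:K} survive in $D$ because every cycle of $D$ lies entirely in $B$ or in $C$.

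The main obstacle is condition (3) in the amalgam. If a $0$-minimally algebraic copy $(A',B')$ appears in $D$ but not in $B$ or $C$, then $B'$ must meet both sides; using the connectedness built into minimal algebraicity together with $A\leq B$ and $A\leq C$, a $\delta$-inequality pins down the isomorphism type of $(A',B')$ fairly tightly, and the bound $\mu(A',B')\geq\max\{\delta(A'),n\}$ in the definition of $\mu$ is designed to absorb precisely these boundary cases. Once this is established, the standard Fra\"iss\'e--Hrushovski machinery produces a countable $\Gamma_n$ into which every $C\in\mathbb{K}^\mu$ strongly embeds and such that any isomorphism of finite $\leq$-substructures extends to an automorphism of $\Gamma_n$.

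It remains to check that $\Gamma_n$ is a generalized $n$-gon and to derive the transitivity statement. Condition (1) gives girth at least $2n$, and exhibiting a single $2n$-cycle in $\mathbb{K}^\mu$ and applying the extension property show that the girth is exactly $2n$. For the diameter, the path of length $n-1$ connecting two chosen endpoints lies in $\mathbb{K}^\mu$, so the extension property forces any two vertices of $\Gamma_n$ to be at distance at most $n$. Finally, any ordered $(2n+2)$-cycle $C_0$ satisfies $\delta(C_0)=2n+2$, and condition (2) of Definition~\ref{d:K} forces every finite extension of $C_0$ inside $\Gamma_n$ to have $\delta\geq 2n+2$; hence $C_0\leq\Gamma_n$. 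Since any two ordered $(2n+2)$-cycles are isomorphic as based graphs, the $\leq$-homogeneity of $\Gamma_n$ promotes such an isomorphism to an automorphism, yielding the required transitivity.
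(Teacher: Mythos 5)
The paper treats the main statement as a citation from \cite{Tent}, Thm.~4.6 (hence the \qed\ inside the theorem) and supplies an argument only for the ``In particular'' clause, in the one sentence that follows: any $(2n+2)$-cycle $C_0$ has $\delta(C_0)=2n+2$, and by Definition~\ref{d:K}(2) any finite superset of $C_0$ has $\delta\geq 2n+2$, so $C_0\leq\G_n$; the $\leq$-homogeneity of $\G_n$ then yields transitivity on ordered $(2n+2)$-cycles. Your derivation of this corollary matches the paper's argument and is correct.

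Your attempt to reprove the cited theorem itself via a Fra\"iss\'e--Hrushovski limit is a genuinely different and more ambitious route, and the overall strategy is right, but the sketch has a concrete error where the work actually lies. The claim that ``every cycle of $D$ lies entirely in $B$ or in $C$'' in the free amalgam is false: a simple cycle can alternate between $B\setminus A$ and $C\setminus A$ by passing through $A$ two or more times. For instance, with $n=3$ take $A=\{a_1,a_2\}$ with no edge, $B=Aa_1a_2b$ with $b$ adjacent to both $a_1,a_2$, and $C=Aa_1a_2c$ with $c$ adjacent to both; then $\delta(A)=\delta(B)=\delta(C)=4$, so $A\leq B$ and $A\leq C$, yet the free amalgam contains the $4$-cycle $(a_1,b,a_2,c)$, which violates condition~(1) of Definition~\ref{d:K}. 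Thus free amalgamation fails outright: the amalgam in $\mathbb{K}^\mu$ must sometimes identify points of $B$ and $C$, and justifying when and how this can be done is precisely the substantive content of the cited theorem. For the same reason, the preservation of condition~(2) cannot rest on cycles staying inside one side. Your treatment of condition~(3) is also entirely schematic; one needs an explicit lemma describing how a $0$-minimally algebraic pair can straddle the amalgam and a count against the $\mu$-bound, not just the observation that $\mu$ was ``designed to absorb'' such cases. As written, the sketch does not establish that $(\mathbb{K}^\mu,\leq)$ is an amalgamation class, so the existence of $\G_n$ is not proved.
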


The last statement follows from the fact that by  Definition~\ref{d:K}.2 any $(2n+2)$-cycle is strongly embedded into $\G_n$.
Moreover we have the following:

\begin{remark}\label{r:strongsubsets}
Using Theorem 3.11 of \cite{Tent}, one sees easily any set $A\subseteq \G_n$ with $\delta(A)\leq 2n+1$ is strongly 
embedded into $\G_n$. The description given there also
implies  that for $k\leq n$ and any path $\gamma=(x_0,\ldots, x_k)$ in $\G_n$ the  stabilizer $\Aut_\gamma(\G_n)$ acts $(n+3-k)$-transitively on $D_1(x_k)\setminus\{x_{k-1}\}$.
\end{remark}

The main result of the paper now is the following:

\begin{Theorem}\label{t:simple}
The automorphism group $\Aut(\G_n)$ of $\G_n$ is a simple group.
\end{Theorem}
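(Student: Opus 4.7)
The plan is to combine Lascar's theorem from \cite{Lascar}---which gives that the normal subgroup $\Autf(\G_n) \trianglelefteq \Aut(\G_n)$ of strong automorphisms is simple, once one checks (via transitivity of $\Aut(\G_n)$ on vertices together with the high transitivity of Remark~\ref{r:strongsubsets}) that its center is trivial---with a geometric commutator argument exploiting the homogeneity of $\G_n$ provided by Theorem~\ref{t:limit}. Given an arbitrary non-trivial normal subgroup $N \trianglelefteq \Aut(\G_n)$, the goal is to show $N = \Aut(\G_n)$.

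Given $\sigma \in N \setminus\{1\}$ with $\sigma(x) \neq x$ for some vertex $x$, I would first use the $(n+3-k)$-transitivity of $\Aut_\gamma(\G_n)$ from Remark~\ref{r:strongsubsets} to construct $\tau \in \Aut(\G_n)$ pointwise fixing a large strongly embedded finite subgraph $A$ while moving $\sigma(x)$. The commutator $[\sigma,\tau] \in N$ is then non-trivial and pointwise fixes enough of $\G_n$ to lie in $\Autf(\G_n)$, so $N \cap \Autf(\G_n) \neq 1$; by Lascar's simplicity, this forces $\Autf(\G_n) \subseteq N$. To conclude $N = \Aut(\G_n)$, I would then show that $\Autf(\G_n) = \Aut(\G_n)$: given $g \in \Aut(\G_n)$ and a finite $A \leq \G_n$, the extension property of Theorem~\ref{t:limit} produces a common strongly embedded amalgam of $A$ and $g(A)$, which lets us factor $g$ as a product of two automorphisms each pointwise fixing some strongly embedded finite set, placing $g$ in $\Autf(\G_n)$.

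The main obstacle will be the commutator step: producing $\tau$ with $[\sigma,\tau] \neq 1$ and $[\sigma,\tau] \in \Autf(\G_n)$ simultaneously. One must choose $A$ large enough for the commutator to lie in $\Autf(\G_n)$ (it should contain most of the region where $\sigma$ acts non-trivially), yet exclude just enough vertices to witness non-triviality; the high transitivity of the pointwise stabilizer of $A$ on the excluded vertices, guaranteed by Remark~\ref{r:strongsubsets}, is what makes this possible. Ensuring that the resulting amalgamations stay within the class $\mathbb{K}^\mu$---respecting the $\mu$-bounds on copies of $0$-minimally algebraic extensions---is the essentially geometric heart of the proof promised in the introduction.
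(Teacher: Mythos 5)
Your proposal misstates what Lascar's theorem delivers, and as a result it omits the main technical content of the paper. Lascar's theorem, as applied here (Theorem~\ref{t:Lascar}), gives that $\Aut_\gamma(\G_n)/\Bdd_\gamma(\G_n)$ is simple, where $\gamma$ is the specific finite set with $\acl(\gamma)=\gamma$ over which $\G_n$ is almost strongly minimal, and $\Bdd_\gamma(\G_n)$ is the normal subgroup of \emph{bounded} automorphisms (those $\beta$ with $x^\beta\in\acl(xA)$ for all $x$ and some fixed finite $A$). You instead claim the theorem gives simplicity of $\Autf(\G_n)$ subject only to checking the center is trivial. But note that $\acl(\emptyset)=\emptyset$ (since $d(x)>0$ for every vertex $x$), so with the paper's definition $\Autf(\G_n)=\Aut(\G_n)$ identically; if Lascar directly gave simplicity of $\Autf(\G_n)$ the theorem would be immediate, which it is not. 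The obstruction is not the center but $\Bdd_\gamma(\G_n)$, and showing $\Bdd_\gamma(\G_n)=1$ (in fact $\Bdd(\G_n)=1$, Proposition~\ref{p:nobounded}) is the genuinely geometric heart of the paper: it requires the $0$-minimally algebraic cycles of Lemma~\ref{l:enough0alg} and Corollary~\ref{c:enough0alg} and an inductive descent on $d(b/A)$ (Lemmas~\ref{l:indbeg} and~\ref{l:induction}). Your proposal contains none of this, and the "check the center is trivial'' criterion does not substitute for it.

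Your final step is likewise off: proving $\Autf(\G_n)=\Aut(\G_n)$ is vacuous for the reason above, and the amalgamation/factorization argument would not be needed. What the paper actually has to do after obtaining simplicity of $\Aut_\gamma(\G_n)$ is climb from $\Aut_\gamma(\G_n)$ up to $\Aut(\G_n)$. It does this by a chain of point stabilizers $\Aut_{\gamma_{n+1}}(\G_n)\subset\cdots\subset\Aut_{\gamma_0}(\G_n)=\Aut_{x_0}(\G_n)\subset\Aut(\G_n)$, showing at each step that the smaller group is a maximal non-normal subgroup and that any nontrivial normal subgroup meets it (via the $3$-transitivity of Remark~\ref{r:strongsubsets} for the inner steps, and the BN-pair plus a regularity-destroying argument for the last step). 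Your commutator sketch is in a broadly similar spirit for this climbing phase, and such an approach could conceivably be made to work, but as written it is aimed at the wrong target ($\Autf(\G_n)$) and it does not address the central missing ingredient: the elimination of bounded automorphisms.
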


To prove this we will invoke Lascar's result \cite{Lascar}. For this
we need to introduce some more terminology.
In order to keep the model theoretic notions as accessible
as possible we will use the following definition of algebraic
closure $\acl(A)$ of a set $A\subset\G_n$. (For  the general definition
of algebraic closure we refer the reader
to \cite{TZ}, Ch. 5.6.)
\begin{definition}{\cite{Wag1}}\label{rem2}\begin{enumerate}
\item For finite subsets $A,B\subset \Gamma_n$ we define
\[d(A):=min~\{\delta(A'):A\subseteq A'\subseteq \G_n, A'\mbox{ finite }\}\]
and $d(B/A)=d(BA)-d(A)$.

\item We say that $A\subseteq \G_n$ is $\leqslant$-closed if $\delta(A)=d(A)$, 
or, equivalently, if $A$ is strong in $\G_n$. 
The $\leqslant$-\textit{closure}  $\cl(A)$ of $A$ is defined as
\[\cl(A):=\bigcap\{B\leq \G_n\colon A\subset B\}.\]
Thus $\cl(A)$ is the smallest strong subset of $\G_n$ containing $A$.

\item For $A\subseteq \G_n$ we define the \emph{algebraic closure} $\acl(A)$ of $A$ as \[\acl(A)=\{x\in\G_n\colon d(x/A_0)=0 \mbox{ for some finite } A_0\subseteq A\}.\]
\end{enumerate}
\end{definition}

Note that if $A$ is finite, then so is $\cl(A)$ 
since the $\delta$-value of any nonempty
set is positive and hence can decrease only finitely many times.
However, $\acl(A)$ is not finite in general, see Lemma~\ref{l:enough0alg}.
For every finite set $B\subset \G_n$, we have $\cl(B)\subset \acl(B)$.

The following was proved in \cite{Tent}
where for vertices $a,b\in\Gamma_n$ we let $\dist(a,b)$ denote the graph theoretic distance between $a,b$ in $\Gamma_n$:
\begin{Theorem}\label{t:almstrmin}
For any $x\in\Gamma_n$ we have 
$\Gamma_n\subset\acl(D_1(x)\cup\{y_1,y_2,y_3\})$ where $\dist(x,y_1)=\dist(x,y_3)=n$
and $(y_1,y_2,y_3)$ is a path of length $2$.
Furthermore, the set $D_1(x)$ is \emph{strongly minimal}, i.e.
for any finite set $C\subset D_1(x)$ and $z_1,z_2\in D_1(x)\setminus\acl(C)$ there is an automorphism fixing $C\cup\{y_1,y_2,y_3\}$ and taking $z_1$ to $z_2$. Hence $\Gamma_n$ is almost strongly minimal over $A_0=\{x,y_1,y_2,y_3\}$.
\end{Theorem}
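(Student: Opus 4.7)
The plan is to use the observation (third clause of the Remark preceding Definition~\ref{d:K}) that the $n-2$ internal vertices of a path of length $n-1$ form a $0$-minimally algebraic set over its two endpoints, together with the strong homogeneity and $(2n+2)$-cycle transitivity of $\Aut(\G_n)$ from Theorem~\ref{t:limit} and Remark~\ref{r:strongsubsets}.

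For the first claim, I begin by observing $x \in \acl(D_1(x))$: since the girth is $2n \geq 6$, two distinct vertices of $\G_n$ have at most one common neighbour, so $x$ is definable from any two elements of $D_1(x)$. Hence $x$ may be treated as a base parameter. For each $v \in D_1(x)$, the triangle inequality, bipartite parity and the diameter-$n$ bound force $\dist(v, y_1) = n-1$, which is realised by a unique geodesic since $n-1$ is below half the girth. Its $n-2$ internal vertices therefore lie in $\acl(\{v, y_1\}) \subseteq \acl(S)$ where $S := D_1(x) \cup \{y_1, y_2, y_3\}$. The same argument applies with $y_3$ replacing $y_1$, and to the unique geodesic $x \to y_2$ of length $n-1$ (using $\dist(x, y_2) = n-1$ from parity and diameter). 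The map $v \mapsto v'$ sending $v \in D_1(x)$ to the neighbour of $y_1$ on the geodesic $v \to y_1$ is a bijection $D_1(x) \to D_1(y_1)$ (injectivity from girth; surjectivity from the symmetric statement $\dist(w, x) = n-1$ for $w \in D_1(y_1)$), whence $D_1(y_1), D_1(y_3) \subseteq \acl(S)$.

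To reach an arbitrary $z \in \G_n$, I extend the geodesic $x = v_0, v_1, \ldots, v_k = z$ in $\G_n$ further to a path of length $n$, obtaining an opposite vertex $v_n$. The internal path $\{v_2, \ldots, v_{n-1}\}$ (containing $z$ when $2 \leq k \leq n-1$) is $0$-minimally algebraic over $\{v_1, v_n\}$, so $z \in \acl(\{v_1, v_n\})$; the boundary cases $k \in \{0, 1, n\}$ are either immediate or reduce to the algebraicity of $v_n$. It therefore suffices to show every opposite vertex $v_n$ of $x$ lies in $\acl(S)$. For this one uses that any such $v_n$ fits into a $(2n+2)$-cycle through $x$ that couples to the scaffolding supplied by the geodesic $x \to y_2$ together with $y_1 y_2 y_3$; combined with the already-algebraic $D_1(y_1), D_1(y_3) \subseteq \acl(S)$ as secondary landmarks and the $(2n+2)$-cycle transitivity of $\Aut(\G_n)$ (Remark~\ref{r:strongsubsets}), this forces the $\Aut(\G_n/S)$-orbit of $v_n$ to be finite.

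For the strong minimality of $D_1(x)$, given finite $C \subseteq D_1(x)$ and $z_1, z_2 \in D_1(x)$ not algebraic over $C \cup \{x, y_1, y_2, y_3\}$, form the strong closures $X_i := \cl(C \cup \{x, y_1, y_2, y_3, z_i\})$. Since $z_i$ is non-algebraic, adjoining it creates no new $0$-algebraic configuration, so $X_i$ is the free amalgam of $\cl(C \cup \{x, y_1, y_2, y_3\})$ with the single vertex $z_i$ joined only by the edge $z_i x$. The partial map fixing $C \cup \{x, y_1, y_2, y_3\}$ pointwise and sending $z_1 \mapsto z_2$ is therefore an isomorphism between strong subsets of $\G_n$, which by Theorem~\ref{t:limit} extends to an automorphism of $\G_n$. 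The main difficulty in the argument is the opposite-vertex step of the first claim: vertices on a geodesic through $y_1$ or $y_3$ are immediate, but ``off-axis'' opposite vertices require the subtle interplay of $(2n+2)$-cycle transitivity with iterated length-$(n-1)$-path algebraicity to have finite orbit over $S$.
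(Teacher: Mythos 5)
The paper itself gives no proof of Theorem~\ref{t:almstrmin}; it cites \cite{Tent} for it, so there is no internal argument to compare against. Judged on its own, your proposal contains a genuine gap precisely where you flag one, and that gap is the heart of the theorem. The preparatory steps are sound: $x\in\acl(D_1(x))$, uniqueness of the geodesic from $v\in D_1(x)$ to $y_1$ and $0$-minimal algebraicity of its internal vertices, the induced bijection $D_1(x)\to D_1(y_1)$, and the reduction of the general case to ``every vertex opposite $x$ lies in $\acl(S)$.'' The strong-minimality half is also essentially correct: if $z_i\in D_1(x)\setminus\acl(CA_0)$ then $z_i$ has exactly one edge into $\cl(CA_0)$ (two would drop $\delta$ by $n-2\geq 1$), so $\cl(CA_0z_i)=\cl(CA_0)\cup\{z_i\}$ and strong homogeneity via Theorem~\ref{t:limit} finishes --- though one should state explicitly that $\acl(C)$ in the theorem statement is to be read as $\acl(C\cup A_0)$, as you implicitly do.

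The opposite-vertex step, however, is not an argument but a description of one: ``fits into a $(2n+2)$-cycle... secondary landmarks... forces the orbit to be finite.'' Definition~\ref{rem2} defines $\acl$ combinatorially, via $d(v_n/A_0)=0$ for some finite $A_0\subseteq S$, and you exhibit no such $A_0$ and no $0$-algebraic configuration witnessing it; you also silently switch to an orbit-based notion of algebraicity (they coincide here, but the paper works throughout with $d$, and that coincidence itself needs the construction). The difficulty is real: for $v_1,v_1'\in D_1(x)$ and $v_n$ opposite $x$, a direct computation along the $2n$-cycle through $x,v_1,v_1',v_n$ gives $d(v_n/\{v_1,v_1'\})=n-1>0$, and using $t$ neighbours $v_1,\dots,v_t$ of $x$ still gives $d(v_n/\{v_1,\dots,v_t\})=n-1$, since each geodesic $v_i\to v_n$ contributes $\delta$-drop exactly $1$. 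So $v_n$ is not $0$-algebraic over any finite subset of $D_1(x)$ alone: the parameters $y_1,y_2,y_3$ and, presumably, the $\mu$-bound on $0$-minimally algebraic copies must enter in an essential way. Since the entire first assertion of the theorem rides on this step, it has to be supplied, not sketched.
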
 

For the general definition of a strongly minimal set see \cite{TZ}, Sec. 5.7.

\begin{lemma}\label{l:strmin}
With $A_0=\{x,y_1,y_2,y_3\}$ as in Theorem~\ref{t:almstrmin}, we have $\acl(A_0)=\gamma=(x=x_0,\ldots, x_{n-1}=y_2,x_n=y_1,x_{n+1}=y_3)$ where $(x_0,\ldots, x_n)$ is a path of length $n$ with an additional neighbour $x_{n+1}=y_3$ added
to $y_2=x_{n-1}$. In particular, $\acl(\gamma)=\gamma$.
\end{lemma}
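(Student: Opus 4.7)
The plan is to establish $\acl(A_0) = \gamma$ directly; the ``in particular'' clause $\acl(\gamma) = \gamma$ then follows by idempotence of $\acl$, since $A_0 \subseteq \gamma \subseteq \acl(A_0)$ forces $\acl(\gamma) = \acl(A_0)$. I split the proof into the two inclusions.

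For $\gamma \subseteq \acl(A_0)$: since $y_2 = x_{n-1}$ is a neighbour of $y_1 = x_n$, which lies at graph distance $n$ from $x_0$, the diameter bound together with the triangle inequality forces $\dist(x_0, y_2) = n-1$. The girth $2n$ then makes the geodesic of length $n-1$ from $x_0$ to $y_2$ unique, because two distinct such geodesics would close up into a cycle of length $\leq 2(n-1) < 2n$. Equivalently, this is the $\mu = 1$ clause of Definition~\ref{d:K} for length-$(n-3)$ paths. Hence the intermediate vertices $x_1, \dots, x_{n-2}$ are determined by $\{x_0, y_2\}$, so they lie in $\acl(\{x_0, y_2\}) \subseteq \acl(A_0)$, and $\gamma \subseteq \acl(A_0)$.

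For the reverse inclusion I work with $\acl(\gamma)$ and show $\acl(\gamma) \subseteq \gamma$. A direct count gives $\delta(\gamma) = (n-1)(n+2) - (n-2)(n+1) = 2n$, so by Remark~\ref{r:strongsubsets} $\gamma \leq \Gamma_n$ and $d(\gamma) = 2n$. The geometric heart of the argument is the subclaim that no $z \in \Gamma_n \setminus \gamma$ has two neighbours in $\gamma$: two such neighbours would lie on the same side of the bipartition, the shortest path between them inside the T-shape $\gamma$ has length at most $n$, so adjoining the two edges from $z$ yields a cycle of length at most $n+2 < 2n$, violating the girth condition. Consequently, when $e(z, \gamma) = 1$ one has $\delta(\gamma z) = 2n+1$, so Remark~\ref{r:strongsubsets} applies again to give $\gamma z \leq \Gamma_n$, whence $d(z/\gamma) = 1 > 0$ and $z \notin \acl(\gamma)$.

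The remaining case $e(z, \gamma) = 0$ is the technical crux. Supposing $d(\gamma z) = 2n$, I can pick a finite $B \supseteq \gamma z$ with $\delta(B) = 2n$; writing $C := B \setminus \gamma$ and using the disjoint-union formula for $\delta$, one arrives at
\[(n-1)|C| = (n-2)\bigl(e(C) + e(C, \gamma)\bigr),\]
with $e(c, \gamma) \leq 1$ for every $c \in C$ and $e(z, \gamma) = 0$. I then plan to decompose $C$ as a chain of $0$-minimally algebraic pieces attached to $\gamma$ inside $B$. The $\mu$-constraints of Definition~\ref{d:K} leave essentially two possibilities for each piece: either it is a length-$(n-3)$ path between two $\gamma$-vertices at $\Gamma_n$-distance $n-1$, or it has $\mu \geq n$ and, when grafted onto $\gamma$, produces a cycle of length $> 2n$, forcing $\delta \geq 2n+2$ by Definition~\ref{d:K}(2) and contradicting $\delta(B) = 2n$. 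The first possibility is eliminated because the only pairs of vertices in $\gamma$ at $\Gamma_n$-distance $n-1$ are $\{x_0, x_{n-1}\}$, $\{x_1, x_n\}$ and $\{x_1, x_{n+1}\}$, all of which are already joined by such a path inside $\gamma$, so any new copy of the $\mu = 1$ configuration would coincide with $\gamma$. This combinatorial case analysis, including the verification that no short cycle is forced, is the main obstacle; the guiding intuition is that the length-$n$ spine of $\gamma$ is long enough that any extra $0$-algebraic attachment is squeezed into a forbidden distance range.
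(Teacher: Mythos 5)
Your argument correctly handles the easy pieces but leaves the hard part as an acknowledged gap, so the proof is not complete.

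More precisely: the inclusion $\gamma\subseteq\acl(A_0)$ via uniqueness of geodesics is fine, the computation $\delta(\gamma)=2n$ is correct, and the observations that no $z\notin\gamma$ has two $\gamma$-neighbours (girth), and that $e(z,\gamma)=1$ forces $\delta(\gamma z)=2n+1$ whence $\gamma z\leqslant\Gamma_n$ and $d(z/\gamma)=1$, are both correct and match what is needed. However, $\acl(\gamma)=\gamma$ is equivalent to the assertion that there is \emph{no} finite $B\supsetneq\gamma$ with $\delta(B)=2n$ (since $z\in\acl(\gamma)$ iff $d(\gamma z)=d(\gamma)=2n$, which is witnessed by such a $B$). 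Your case $e(z,\gamma)\leq 1$ only rules out the $B$'s of the form $\gamma z$; the genuinely difficult content is exactly what you call ``the technical crux'' and ``the main obstacle,'' namely ruling out \emph{all} larger configurations $B$ with $\delta(B)=2n$. Your sketch of a decomposition into $0$-minimally algebraic pieces and a reduction via the $\mu$-constraints of Definition~\ref{d:K} is a plausible direction, but key claims in it (e.g.\ that any piece with $\mu\geq n$ ``when grafted onto $\gamma$ produces a cycle of length $>2n$,'' and that a chain decomposition of $C=B\setminus\gamma$ into such pieces exists and exhausts all cases) are asserted rather than proved, and you explicitly say you have not carried out the case analysis. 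As written the proof therefore does not close.

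For comparison, the paper's proof is a two-line reduction to Lemma~3.12 of \cite{Tent}: it records $\delta(A_0)=\delta(\gamma)=2n$ and then observes that the nonexistence of any set $B$ with $\delta(B)=2n$ properly containing $\gamma$ (which is what that cited lemma supplies in this situation) kills all $0$-minimally algebraic sets over $\gamma$, hence $\acl(\gamma)=\gamma$. In other words, the entire combinatorial load you are trying to carry from scratch is what the citation is for. If you want a self-contained proof along your lines, the missing work is to make the decomposition of $C$ into $0$-minimally algebraic pieces rigorous (or to redo the relevant part of \cite{Tent}, Lemma~3.12), and to check carefully that no configuration with $\delta(B)=2n$, $B\supsetneq\gamma$ can occur without creating either a short cycle (girth violation) or a long cycle inside a set of $\delta$-value $<2n+2$ (violating Definition~\ref{d:K}(2)).
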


\begin{proof}
 Note that $\delta(A_0)=\delta(\gamma)=2n$.
It suffices to show that there are no $0$-minimally algebraic sets over $\gamma$. This follows from Lemma 3.12 in \cite{Tent}
since there is no set $B$ with $\delta(B)=2n$ properly containing $\gamma$.
\end{proof}

\section{Lascar's theorem}

Lascar's theorem refers to the group of strong automorphisms of an almost
strongly minimal structure.
In light of \cite{Balshi} Lemma 5.4, we may here use the following definition:

\begin{definition}\begin{enumerate}
\item An automorphism of $\G_n$ is called \emph{strong} over $A$
if it fixes $\acl(A)$ pointwise. We let
$\Autf_A (\Gamma_n)$ denote the group of all automorphisms strong over $A$.
We drop the subscript in the case where $A$ is the empty set.
\item
 An automorphism $\beta\in \Aut(\G_n)$ is called \textit{bounded} if there exists a finite set $A\subset \G_n$ 
such that $x^\beta\in \acl(xA)$ for all $x\in \G_n$ where $x^\beta$ denotes the image of $x\in\G_n$ under $\beta$. In this case we say that $\beta$ is bounded over $A$. Let $\Bdd(\G_n)$ be the set of all bounded automorphisms of $\G_n$.
\end{enumerate}
\end{definition}

 With the notation from Theorem~\ref{t:almstrmin} and Lemma~\ref{l:strmin}, using the fact that $\gamma=\acl(\gamma)=\acl(A_0)$,
we have $\Autf_\gamma(\G_n)=\Aut_\gamma(\G_n)$.
Note that the \emph{exchange property} for algebraic closure in almost strongly
minimal structures implies that $\Bdd_\gamma(\G_n)$ is a (normal) subgroup of $\Aut_\gamma(\G_n)$:
namely,  $x\in\acl(yA)\setminus\acl(A)$ implies
$y\in\acl(xA)$ for all sets $A\subseteq\G_n$ with $A_0\subseteq A$ and $x,y\in\G_n$ (see e.g. \cite{TZ}, Thm. 5.7.5).

Since we saw that $\G_n$ is almost strongly minimal over $\gamma=\acl(\gamma)$
the main theorem of \cite{Lascar} applied to $\G_n$ now yields:

\begin{Theorem}{\rm(Lascar)}\label{t:Lascar}
The group $\Aut_\gamma(\G_n)/\Bdd_\gamma(\G_n)$ is simple.
\end{Theorem}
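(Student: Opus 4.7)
The plan is to reconstruct Lascar's argument directly in the setting of $\G_n$, leaning on almost strong minimality (Theorem~\ref{t:almstrmin}): $\G_n=\acl(D_1(x)\cup\gamma)$ with $D_1(x)$ strongly minimal. The combinatorial geometry induced on $D_1(x)$ by the operator $\acl(-\cup\gamma)$ is the main object on which $\Aut_\gamma(\G_n)/\Bdd_\gamma(\G_n)$ acts, and the whole question reduces to proving simplicity of the induced action.

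The first step is to embed the quotient faithfully into the automorphism group of this pregeometry. Any $\beta\in\Aut_\gamma(\G_n)$ restricts to an $\acl(-\cup\gamma)$-preserving permutation of $D_1(x)$, inducing an automorphism $\bar\beta$ of the associated pregeometry. I would argue that $\beta\in\Bdd_\gamma(\G_n)$ precisely when $\bar\beta$ is trivial: the forward direction is immediate from the definition of boundedness; conversely, if $\beta$ fixes every $\acl(-\cup\gamma)$-class in $D_1(x)$, then because $\G_n=\acl(D_1(x)\cup\gamma)$, every $y\in\G_n$ satisfies $\beta(y)\in\acl(y\gamma)$, so $\beta$ is bounded over $\gamma$.

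The second step is to show that the induced group $G=\Aut_\gamma(\G_n)/\Bdd_\gamma(\G_n)$ acts highly transitively on the pregeometry; in particular, transitively on independent $k$-tuples for every $k$. This is the payoff of the Fra\"\i ss\'e-type homogeneity of Theorem~\ref{t:limit}: any isomorphism between $\leqslant$-closed finite subsets extends to an automorphism of $\G_n$. Combined with Remark~\ref{r:strongsubsets} and the strongly minimal transitivity clause of Theorem~\ref{t:almstrmin}, any bijection between two finite independent tuples in $D_1(x)$ that preserves the type over $\gamma$ lifts to some $\beta\in\Aut_\gamma(\G_n)$, and hence to an element of $G$.

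The final and hardest step is simplicity of $G$. The strategy is Iwasawa-style: given a nontrivial normal subgroup $N\trianglelefteq G$ and a nontrivial $\bar\sigma\in N$, choose $\bar\tau\in G$ whose ``support'' in the pregeometry meets that of $\bar\sigma$ only minimally. The commutator $[\bar\sigma,\bar\tau]\in N$ can then be arranged to act like a $3$-cycle on an independent triple, and normal conjugates of such small-support elements, together with the high transitivity of step two, generate all of $G$, yielding $N=G$. The principal obstacle is this commutator extraction: controlling supports requires a careful analysis of the pregeometry on $D_1(x)$ (whether it is disintegrated, projective, or otherwise, in the spirit of Hrushovski constructions) to guarantee that a supply of small-support permutations really exists inside $G$ and suffices to generate it.
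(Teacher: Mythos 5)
The paper does not prove this theorem. It is quoted directly from \cite{Lascar}; the paper's contribution at this point is only to verify the hypothesis that makes Lascar's theorem applicable, namely that $\G_n$ is almost strongly minimal over $\gamma=\acl(\gamma)$ (Theorem~\ref{t:almstrmin} and Lemma~\ref{l:strmin}). Your proposal, by contrast, tries to reconstruct Lascar's argument from scratch, which is a genuinely different --- and far more ambitious --- undertaking.

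As a reconstruction, it has gaps. In step one your converse does not hold as stated: from $\beta(d)\in\acl(d\gamma)$ for every $d\in D_1(x)$ you conclude that $\beta(y)\in\acl(y\gamma)$ for every $y\in\G_n$. But if $y\in\acl(d_1d_2\gamma)$ with $d_1,d_2$ independent, one only gets $\beta(y)\in\acl(d_1d_2\gamma)$, which is in general strictly larger than $\acl(y\gamma)$; so the kernel of the action on the geometry over $\gamma$ is not $\Bdd_\gamma(\G_n)$. Identifying $\Bdd_\gamma$ correctly requires passing to localizations over arbitrary finite parameter sets (the geometry "at infinity"), and this identification is itself a substantial piece of Lascar's paper, not a formality. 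Step three is where the real content of the theorem lives, and you only offer a plan: "arrange a commutator that acts like a $3$-cycle" and "small-support elements generate" are not established, and as you yourself note their existence hinges on the nature of the pregeometry. Lascar's proof must work uniformly for disintegrated, locally modular, and non-locally-modular strongly minimal sets, so an argument keyed to a particular geometry type would not even reproduce the theorem you need here. In short, the proposal identifies the right objects but leaves the two hardest ingredients --- the identification of $\Bdd_\gamma$ as the kernel and the simplicity of the induced action --- unproved.
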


In order to prove Theorem~\ref{t:simple}
we will first show in Proposition~\ref{p:nobounded} that $\G_n$ does not allow any non-trivial bounded
automorphisms (whether or not we fix $\gamma$) and finally 
that the simplicity of $\Aut_\gamma(\G_n)$
implies that of $\Aut(\G_n)$.

\section{0-minimally algebraic sets}

In this section we investigate some properties of $0$-minimally algebraic sets in $\Gamma_n$. 
\begin{lemma}
\label{l:0alg}
 Let $A$ be a finite $\leqslant$-closed set. \begin{enumerate}
\item\label{remlem2} If $D$ is $0$-minimally algebraic over $A_0\subset A$, then either $D$ is $0$-algebraic over $A$ or $D\subset A$.
\item\label{l:disjoint} If $D_1,D_2$ are $0$-algebraic over $A$, they are equal or disjoint.
\end{enumerate}
\end{lemma}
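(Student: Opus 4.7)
The plan is to exploit the submodularity of the predimension $\delta$ together with the $\leqslant$-closedness of $A$. First I will record the inequality
\[\delta(X\cup Y) + \delta(X\cap Y) \leqslant \delta(X)+\delta(Y)\]
for any finite subgraphs $X, Y$ of $\G_n$, which follows from $|X\cup Y| + |X\cap Y| = |X|+|Y|$ and the fact that any edge between $X\setminus Y$ and $Y\setminus X$ is counted on the left but not on the right, forcing $e(X\cup Y) + e(X\cap Y) \geqslant e(X)+e(Y)$. Combined with $A\leqslant \G_n$, this yields $\delta(C/A)\geqslant 0$ for every finite $C\subseteq\G_n$, which I will apply repeatedly.

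For part (1), I will split on whether $D\cap A$ is empty. In the disjoint case, submodularity applied to $X=DA_0$ and $Y=A$ has intersection $A_0$ and union $DA$; using $\delta(D/A_0)=0$ this gives $\delta(D/A)\leqslant 0$, hence $\delta(D/A)=0$ by $\leqslant$-closedness of $A$. The identity
\[\delta(D'/A)=\delta(D'/A_0)-(n-2)\,e(D', A\setminus A_0),\]
valid for any $D'\subseteq D$ (automatically disjoint from $A$), applied with $D'=D$ forces $e(D, A\setminus A_0)=0$, and then applied to any proper nonempty $D'\subsetneq D$ gives $\delta(D'/A)=\delta(D'/A_0)>0$; so $D$ is $0$-algebraic over $A$. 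If instead $D\cap A$ is nonempty but $D\not\subseteq A$, set $D''=D\cap A$: a proper nonempty subset of $D$, still disjoint from $A_0$, so $\delta(D''/A_0)>0$ by $0$-algebraicity of $D$ over $A_0$. Submodularity with the same $X,Y$ now has intersection $D''A_0$ and gives $\delta(D/A)\leqslant -\delta(D''/A_0)<0$, contradicting $A\leqslant\G_n$. Hence $D\subseteq A$ in this remaining case.

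For part (2), I will apply submodularity to $X=D_1A$ and $Y=D_2A$, whose union is $(D_1\cup D_2)A$ and whose intersection is $(D_1\cap D_2)A$ (using that each $D_i$ is disjoint from $A$). Since $\delta(D_i/A)=0$, this produces
\[\delta((D_1\cup D_2)/A) + \delta((D_1\cap D_2)/A)\leqslant 0,\]
and both terms are nonnegative by $\leqslant$-closedness of $A$, hence both vanish. If $D_1\neq D_2$ and $D_1\cap D_2\neq\emptyset$, then $D_1\cap D_2$ is a proper nonempty subset of $D_1$, so $\delta((D_1\cap D_2)/A)>0$ by $0$-algebraicity of $D_1$ over $A$, contradicting the previous line.

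I do not expect a serious obstacle here: the only book-keeping is to check that $D$ (respectively $D_1, D_2$) is disjoint from $A_0$ (respectively $A$) by definition of $0$-(minimally) algebraic, and to compute what the intersections $X\cap Y$ look like in each application of submodularity.
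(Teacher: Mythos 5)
Your proof is correct and rests on the same underlying mechanism as the paper's, namely submodularity of $\delta$ combined with $\leqslant$-closedness of $A$. For part (1) the paper compresses the submodularity step into the single inequality $\delta(D/A)\leq\delta(D/D_0A_0)\leq 0$ with $D_0=D\cap A$ and reads off the dichotomy from equality; you unpack the same inequality explicitly and, unlike the paper, verify the ``$0$-algebraic over $A$'' conclusion in the disjoint case by first forcing $e(D,A\setminus A_0)=0$ --- a detail the paper leaves to the reader. For part (2) your route is genuinely more direct: you apply submodularity once to $D_1A$ and $D_2A$ and conclude, whereas the paper deduces part (2) from part (1) via the auxiliary (unproved there) fact that $AD$ is $\leqslant$-closed whenever $A$ is and $D$ is $0$-algebraic over $A$. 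Your version avoids invoking that auxiliary fact.

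One small imprecision in part (2): from $D_1\neq D_2$ and $D_1\cap D_2\neq\emptyset$ it does not literally follow that $D_1\cap D_2$ is a proper subset of $D_1$ (one could have $D_1\subsetneq D_2$). Either say ``WLOG $D_1\cap D_2\subsetneq D_1$'' (justified since not both $D_i$ can equal the intersection), or note that if $D_1\subsetneq D_2$ then $0$-algebraicity of $D_2$ over $A$ already gives $\delta(D_1/A)>0$, contradicting $\delta(D_1/A)=0$, so this case is vacuous. Either fix is a one-liner; the argument is otherwise sound.
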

  
\begin{proof} 
\ref{remlem2}: Let $D_0=D\cap A$. Since $D$ is $0$-minimally algebraic over $A_0$, we have $\delta(D/A)\leq\delta(D/D_0A_0)\leq 0$.
Since $A$ is $\leqs$-closed, it follows that either $D_0=\emptyset$ and $D$ is $0$-algebraic over $A$ or $D_0=D\subseteq A$.

\ref{l:disjoint}: This follows from part \ref{remlem2} and the fact that if $A$ is $\leqs$-closed and $D$ is $0$-algebraic over $A$, then also $AD$ is $\leqs$-closed.
\end{proof}

\begin{definition}\label{d:base}
A \emph{base set} is a set  $A_0=\{s_0,s_1,s_2,s_3\}$  of vertices with
\[\dist(s_i,s_{i+1})=\dist(s_3,s_0)=n, \ \ \ 0\leq i\leq 3\] and such that
\[\dist(s_0,s_2)=\dist(s_1,s_3)\in\{n-1,n\}\] 
depending on whether $n$ is even or odd.
\end{definition}
Note that if $n$ is even, then there are base sets of two different
types of vertices.

\begin{lemma}\label{l:enough0alg}
Let $A_0=\{s_0,s_1,s_2,s_3\}$ be a base set.
Then for any  $\ell\geq 2$, any simple cycle $C_\ell=\{c_0,c_1,\ldots,c_{4\ell(n-2)}=c_0\}$ of length $4\ell(n-2)$
with additional edges between $c_{i(n-2)}$ and $s_{i^*}$
($i^*\equiv i \mod 4), i=0,\ldots, 4\ell-1$ is $0$-minimally algebraic over $A_0$. 
\end{lemma}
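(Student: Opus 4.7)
The plan is to verify directly the three conditions defining $0$-minimal algebraicity of $C_\ell$ over $A_0$: (i) $\delta(C_\ell/A_0)=0$; (ii) $\delta(C'/A_0)>0$ for every proper nonempty $C'\subsetneq C_\ell$; and (iii) no proper $A_0'\subsetneq A_0$ makes $C_\ell$ $0$-algebraic over $A_0'$.

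Conditions (i) and (iii) reduce to elementary counting. For (i), one has $|C_\ell|=4\ell(n-2)$, $e(C_\ell)=4\ell(n-2)$, and $e(C_\ell,A_0)=4\ell$, so
\[
\delta(C_\ell/A_0)=(n-1)\cdot 4\ell(n-2)-(n-2)\cdot 4\ell(n-2)-(n-2)\cdot 4\ell=0.
\]
For (iii), each $s_i\in A_0$ has exactly $\ell$ neighbours in $C_\ell$ (namely the $c_{j(n-2)}$ with $j\equiv i\bmod 4$), so for any $A_0'\subsetneq A_0$ one gets $\delta(C_\ell/A_0')=(4-|A_0'|)(n-2)\ell>0$, and $0$-algebraicity fails over every proper subset of $A_0$.

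The main obstacle, though still essentially combinatorial, is condition (ii). Given a proper nonempty $C'\subsetneq C_\ell$, removing at least one cycle vertex decomposes $C'$ into $r\geq 1$ disjoint arcs $P_1,\dots,P_r$, each a path in $C_\ell$. Writing $p_j=|P_j|$ and letting $s_j$ count the \emph{special} vertices of $P_j$ (those of the form $c_{i(n-2)}$, which are precisely the vertices of $C_\ell$ carrying an edge to $A_0$), a direct computation shows the contribution of $P_j$ to $\delta(C'/A_0)$ to be
\[
(n-1)p_j-(n-2)(p_j-1)-(n-2)s_j \;=\; p_j-(n-2)(s_j-1).
\]

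The key geometric fact I would exploit is that consecutive special vertices sit at cyclic distance exactly $n-2$ on $C_\ell$, so any arc containing $s_j\geq 1$ special vertices must contain at least $(s_j-1)(n-2)+1$ vertices; hence the contribution of $P_j$ is at least $1$. For $s_j=0$ the contribution is trivially $\geq n-1$. Summing over $j$ gives $\delta(C'/A_0)\geq r\geq 1>0$, completing the verification and thus the proof.
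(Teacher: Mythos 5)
Your proof is correct and takes essentially the same route as the paper: verify $\delta(C_\ell/A_0)=0$ by direct count, then reduce condition (ii) to arcs of the cycle and show each arc contributes at least $1$ to $\delta(C'/A_0)$. The only difference is cosmetic: you verify the minimality of $A_0$ (your condition (iii)) by an explicit computation, while the paper leaves this implicit because it follows at once from Remark~3.3(1) -- each $s_i$ has an edge into $C_\ell$, so $A_0$ is forced to be the minimal base.
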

\begin{proof}
Since $\delta(C_\ell)=4\ell(n-2)$ and $e(C_\ell,A)=4\ell$
we have 
\[\delta(C_\ell/A_0)=\delta(C_\ell)-(n-2)e(C_\ell,A_0)=0.\]
It is left to show that
$\delta(D/A_0)>0$ for any proper subset $D$ of $C_\ell$.
It clearly suffices to prove this for connected subsets of $C_\ell$.
But any such  subset $D$ is a simple path. If $D$ has length $r$,
then $\delta(D)=(n-1)+r$. Since $e(D,A_0)\leq 1+r/(n-2)$
we have $\delta(D/A_0)\geq (n-1)+r -(n-2)(1+r/(n-2))=1$.
\end{proof}

Note that since $\mu(C_\ell,A_0)\geq 1$ and $\G_n$ is $\omega$-saturated, such cycles exist in $\G_n$ for any $\ell\geq 2$.

\begin{cor}\label{c:enough0alg}
Let $A$ be a base set if $n$ is odd and the union of two base sets of different
type if $n$ is even. Let $b\in\G_n$ be such that
$d(Ab)=d(A)+n-1$. Then for any finite $\leqs$-closed set $B$  containing $Ab$,
there is a set $D$ not contained in $B$ which 
is $0$-algebraic over $B$ and with $e(D,b)=1$.
\end{cor}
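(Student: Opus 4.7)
The plan is to construct $D$ as a modification of the cycle $C_\ell$ from Lemma~\ref{l:enough0alg}, attached to a base set $A_0\subseteq A$ in the pattern of that lemma but with one attachment edge rerouted from some $s_i\in A_0$ to $b$. The hypothesis on $A$ is precisely what lets one choose $A_0=\{s_0,s_1,s_2,s_3\}\subseteq A$ with $s_0$ in the same bipartite part of $\Gamma_n$ as $b$: for $n$ odd, any base set has two vertices in each part, so some $s_i$ can be used; for $n$ even, the union of two base sets of different types has vertices in both parts, so we pick the base set on the same side as $b$.

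Concretely, for $\ell\geq 2$ I would take $D$ to consist of vertices $c_0,\ldots,c_{4\ell(n-2)-1}$ with cycle edges $c_i-c_{i+1}$ (indices mod $4\ell(n-2)$) together with the attachment edges $c_0-b$ and $c_{i(n-2)}-s_{i^*}$ (with $i^*\equiv i\pmod 4$) for $1\leq i\leq 4\ell-1$, which amounts to replacing the original $c_0-s_0$ attachment by $c_0-b$. Then $e(D,b)=1$ by construction, $e(D,A_0\cup\{b\})=4\ell$, and $\delta(D/B)=4\ell(n-2)-(n-2)\cdot 4\ell=0$. The path-and-disjoint-components bookkeeping in the proof of Lemma~\ref{l:enough0alg} transfers verbatim to show $\delta(D'/B)\geq 1$ for every proper nonempty $D'\subset D$, so $D$ is $0$-minimally algebraic over $A_0\cup\{b\}\subseteq B$; by Lemma~\ref{l:0alg}(1) applied to the $\leqs$-closed set $B$, as long as $D\not\subseteq B$ it is therefore $0$-algebraic over $B$.

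It remains to realise such a $D$ in $\Gamma_n$ outside $B$. Since $B\leqs \Gamma_n$, any additional edges from $D$ to $B$ would yield $\delta(D/B)<0$, so the edges from $D$ to $B$ are exactly the prescribed $4\ell$ attachment edges. By Theorem~\ref{t:limit} together with the $\omega$-saturation argument used in Lemma~\ref{l:enough0alg}, it suffices to verify $BD\in\mathbb{K}^\mu$ with $B\leqs BD$; the strong embedding holds by the positivity computation above, the $\mu$-bound follows from $\mu(A_0\cup\{b\},D)\geq\max\{\delta(A_0\cup\{b\}),n\}=5(n-1)\geq 1$, and bipartiteness is guaranteed by the choice of $s_0$. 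The main obstacle is the no-short-cycle check for $BD$: here the hypothesis $d(Ab)=d(A)+n-1$ is crucial, as it forces $\dist_{\Gamma_n}(b,x)\geq n-1$ for every $x\in\cl(A)$; combined with bipartite parity this upgrades to $\dist_{\Gamma_n}(b,s_i)=n$ whenever $b$ and $s_i$ lie in different parts. Together with the base-set distances $\dist_{\Gamma_n}(s_i,s_{i+1})=n$, this ensures that any cycle in $BD$ formed by an arc of $C_\ell$ and a path in $B$ has length at least $2n$; taking $\ell$ sufficiently large then secures the $\delta\geq 2n+2$ condition on longer cycles and also guarantees $D\not\subseteq B$, since $B$ is finite and contains only finitely many cycles of any given length.
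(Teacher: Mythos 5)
Your proposal is correct and follows essentially the same route as the paper's own proof: reroute one attachment edge of the cycle $C_\ell$ from a base-set vertex to $b$ (choosing $A_0$ of the same bipartite type as $b$), observe that the $\delta$-computations from Lemma~\ref{l:enough0alg} transfer verbatim to give $0$-minimal algebraicity over $A_0b$, pass to $0$-algebraicity over $B$ via Lemma~\ref{l:0alg}(1), and pick $\ell$ large enough that $C_\ell\not\subseteq B$. The extra verifications you supply (membership in $\mathbb{K}^\mu$, the girth check, the $\mu$-bound, the $\omega$-saturation step) are detail the paper treats implicitly by analogy with Lemma~\ref{l:enough0alg}, but they do not change the argument.
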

\begin{proof}
Note that $d(b/A)=n-1$ implies  $\dist(b,a)\geq n-1$ for all $a\in A$.
If $n$ is even let $A_0\subset A$ be the base set of
the same type as $b$ (otherwise $A_0=A$).
For $\ell\geq 2$ let $C_\ell$ be as above with one of the edges between
$c_{i(n-2)}$ to $s_{i^*}$ replaced by an edge to $b$. Then the same proof
shows that $C_\ell$ is $0$-minimally algebraic over $A_0b$. Any finite $\leqs$-closed set $B$ contains only finitely many of these $C_\ell$. Hence
for some $\ell$ the set $C_\ell$ is as required.
\end{proof}

\section{Proof of the main theorem}

The main step towards proving simplicity of $\Aut(\G_n)$ is
to  prove that there is no non-trivial  bounded automorphism of $\G_n$. 
If an automorphism $\beta$ is bounded over a finite set $A$, then clearly it is also bounded over any set $B$ containing $A$. Therefore
we may assume that $\beta$ is bounded over a finite set $A$ which is $\leqs$-closed and contains a base set of each type (in case $n$ is even).

\begin{lemma}
\label{l:indbeg}
  Suppose $\beta$ is a bounded automorphism over the $\leqs$-closed set $A$. If $A\leqslant Ab\leqslant M$ and $d(b/A)=n-1$, then $b$ is fixed by $\beta$.
\end{lemma}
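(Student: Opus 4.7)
The plan is to argue by contradiction: suppose $b' := b^\beta \neq b$. Since $\beta$ is a graph automorphism it preserves both $\delta$ and $d$, and applying the definition of ``bounded over $A$'' to each $a\in A$ gives $A^\beta\subseteq\acl(A)$. Combining these, I would show $d(Ab')\geq d(A^\beta b')=d(Ab)=d(A)+n-1$, while the trivial upper bound $d(Ab')\leq d(A)+\delta(b')=d(A)+n-1$ forces equality. Hence $d(b'/A)=n-1$, so $b'\notin\acl(A)$; by the exchange property, $b\in\acl(Ab')$ and $\acl(Ab)=\acl(Ab')$. I would further observe that $b'$ has no edges to $\acl(A)$: any edge $b'\sim w$ with $w\in\acl(A)$ would, taking $A_w:=\cl(A\cup\{w\})$, give $d(Ab')=d(A_w b')\leq\delta(A_w b')\leq d(A)+(n-1)-(n-2)=d(A)+1$, contradicting $d(Ab')=d(A)+n-1$ for $n\geq 3$.

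Next, I would choose a sufficiently generic neighbour $y\in D_1(b)\setminus\acl(Abb')$. Such $y$ exists: $D_1(b)$ is strongly minimal by Theorem~\ref{t:almstrmin}, so $D_1(b)\cap\acl(Abb')$ is finite or cofinite; the cofinite case would, via $\Gamma_n\subseteq\acl(D_1(b)\cup\{y_1,y_2,y_3\})\subseteq\acl(Abb'y_1y_2y_3)$, contradict $\omega$-saturation (which supplies vertices generic over any finite set). A short $\delta$-computation (noting that any common neighbour of two non-adjacent vertices $u,v$ has $\delta(\cdot/uv)\leq 3-n\leq 0$, forcing algebraicity for $n\geq 3$) then ensures that $y$ has no edges into $Ab'$, so $d(y/Ab')=n-1$; by exchange, $b'\notin\acl(Ay)$ and $b'$ is independent from $y$ over $A$.

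Finally, set $z:=y^\beta$. As $\beta$ is an automorphism, $z$ is adjacent to $b'$, and by boundedness $z\in\acl(yA)$. The ``no edges to $\acl(A)$'' observation forces $z\notin\acl(A)$, so by exchange $y\in\acl(Az)$ and $\acl(Ay)=\acl(Az)$. I then compute $d(b'/yAz)$ in two ways. Using $z\in\acl(yA)$ one gets $d(b'/yAz)=d(b'/Ay)=d(b'/A)=n-1$. Taking instead $X:=\cl(yAz)$, which is $\leqs$-closed with $\delta(X)=d(yAz)$, the edge $b'\sim z\in X$ gives
\[ d(b'/yAz)\leq\delta(Xb')-\delta(X)=(n-1)-(n-2)\cdot e(b',X)\leq 1. \]
For $n\geq 3$ this contradicts $d(b'/yAz)=n-1>1$, so $b^\beta=b$. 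The main obstacle I anticipate is the generic-neighbour step: one must combine the strong-minimality dichotomy for $D_1(b)$ with $\omega$-saturation to produce $y$, and then run $\delta$-computations carefully enough to exclude edges from $y$ to $Ab'$, paying special attention to the small-$n$ corner cases (notably $n=3$, where common neighbours of distinct vertices can a priori exist but must be absorbed into the algebraic closure).
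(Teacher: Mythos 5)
Your overall strategy is genuinely different from the paper's: the paper manufactures a $0$-algebraic set $D$ (one of the $C_\ell$ cycles from Corollary~\ref{c:enough0alg}) with exactly one edge to $b$, then argues that $\delta(B_0/DD^\beta A)\leq 0$ forces $b\in\acl(DA)$, contradicting $d(b/DA)=1$. You instead try to extract a contradiction by tracking a generic neighbour $y\in D_1(b)$ and its image $z=y^\beta$. The early steps of your argument (computing $d(b'/A)=n-1$, showing $b'$ has no edges into $\acl(A)$, producing $y\notin\acl(Abb')$ with no edges into $Ab'$) are all correct.

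However there is a genuine gap in the final step. You claim $d(b'/yAz)=d(b'/Ay)=d(b'/A)=n-1$, which requires $b'$ to be $d$-independent from $y$ over $A$. But $b'\notin\acl(Ay)$ does \emph{not} imply independence here, because $d(b'/A)=n-1\geq 2$; the equivalence of ``not algebraic'' and ``independent'' only holds for elements of $d$-rank one. In fact $y$ and $b'$ are badly entangled through $b$: since $y\in D_1(b)$ with $e(y,A)=0$, one has $d(y/Ab)=1$, hence $d(Ayb)=d(A)+n$, and since $\acl(Ab)=\acl(Ab')$ this gives $d(Ayb')=d(Aybb')=d(A)+n$ and therefore
\[
d(b'/Ay)=d(Ayb')-d(Ay)=(d(A)+n)-(d(A)+n-1)=1.
\]
Thus your ``first computation'' yields $1$, not $n-1$, and your concluding inequality is $1\leq 1$ --- no contradiction. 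The entanglement is unavoidable if $y$ is chosen as a neighbour of $b$: adding $y$ cuts $d(b/A\cdot)$ down to $1$, and $b$ controls $b'$. This is precisely the difficulty that the paper's $C_\ell$-cycle trick is designed to circumvent --- the witness set $D$ there touches $b$ with a single edge but has its ``mass'' anchored on the base set $A$, so that $D$ and $D^\beta$ together create two distinct extra edges into $B_0$ and force $\delta(B_0/DD^\beta A)\leq 0$, while $d(b/DA)$ stays equal to $1>0$. I don't see a way to repair your version without essentially re-importing a construction of that kind.
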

\begin{proof}

Suppose $b\neq b^\beta$. Since $\beta$ is bounded, we have 
 $b^\beta \in \acl(bA)$ and hence
\[d(bb^\beta A)=d(bA)\leq\delta(bA)\leq\delta(A)+n-1=d(bA).\]
Put $B:=\cl(AbA^\beta b^\beta )$. Since $A^\beta b^\beta\subset \acl(Ab)$,  we also have 
\[d(B)=\delta(B)=\delta(A)+n-1.\] With 
$B_0:=B\setminus A$ we have \[n-1=\delta(B/A)=\delta(B_0/A)=\delta(B_0)-(n-2)e(B_0,A).\hspace{1cm}(*)\]

By Corollary~\ref{c:enough0alg}
we find a set $D$ which is $0$-algebraic over $B$ 
%and $0$-minimally algebraic over $A_0b$ with $A_0\subset A$
and $e(b,D)= 1$
and such that $D,D^\beta$ are disjoint from $B$. Since by assumption $b\neq b^\beta$, it follows that $D\cap D^\beta=\emptyset$ and hence
$e(B_0,DD^\beta)= 2$. We then have
\[\delta(B_0/DD^\beta A)=\delta(B_0)-(n-2)e(B_0,DD^\beta A)\]
and hence $\delta(B_0/DD^\beta A)=(n-1)-(n-2)e(B_0,DD^\beta)$ by $(*)$.
Since $n\geq 3$ we have $\delta(B_0/DD^\beta A)\leq 0$. This implies 
$B_0\subseteq \cl(DD^\beta A)\subseteq \acl(DD^\beta A)=\acl(DA)$ and so in particular $b\in \acl(DA)$.

On the other hand, since $AbD$ is $\leqs$-closed, we have $d(b/DA)=\delta(b/DA)=~1$  and so $b\notin\acl(DA)$, a contradiction.
\end{proof}

\begin{lemma}\label{l:induction}
Suppose $\beta$ is bounded over $A\leqslant \G_n$ and fixes all $b\in\G_n$ with $d(b/A)\geq k$.
Let $b\in \G_n$ with $d(b/A)=k$ and $c\in D_1(b)$. Then $\beta$ fixes $c$.
\end{lemma}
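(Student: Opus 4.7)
The plan is to adapt the argument of Lemma~\ref{l:indbeg}, with the key new observation that the contradiction is drawn at $b$ rather than at $c$. Since $d(b/A)=k$, the hypothesis immediately forces $\beta(b)=b$. Assume for contradiction that $c^\beta\neq c$; boundedness then yields $c^\beta\in\acl(cA)\cap D_1(b)$. A short count using $\delta(c/bA)\leq 1$ and $\delta(b/cA)\leq 1$ shows $d(bc/A)\in\{k,k+1\}$, and if $d(bc/A)=k+1$, or if $d(bc/A)=k$ with $d(c/A)=k$, then $d(c/A)\geq k$ and the hypothesis already gives $\beta(c)=c$. We therefore reduce to the case
\[c\in\acl(bA),\qquad d(c/A)=k-1,\qquad d(b/cA)=1,\]
the last relation encoding the critical fact $b\notin\acl(cA)$.

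The main construction then mirrors Lemma~\ref{l:indbeg}. Enlarging $A$ if necessary (this preserves every hypothesis of the lemma, since $d(\cdot/\tilde A)\geq k$ implies $d(\cdot/A)\geq k$), I may assume $A$ contains a base set $A_0$ (or a union of two of different types, when $n$ is even) placed far enough from $b$ and $c$ so that $d(c/A_0)=n-1$. Set $B:=\cl(A\,b\,c\,A^\beta c^\beta)$. Since $c\in\acl(bA)$, $c^\beta\in\acl(cA)\subseteq\acl(bA)$, and $A^\beta\subseteq\acl(A)$, we have $B\subseteq\acl(bA)$, whence $d(B)=d(A)+k$, and with $B_0:=B\setminus A$ strongness of $B$ gives
\[\delta(B_0)-(n-2)\,e(B_0,A)=k. \qquad (*)\]
Corollary~\ref{c:enough0alg}, applied with our $c$ playing the role of the corollary's ``$b$'', produces for sufficiently large $\ell$ and a sufficiently generic cycle a $0$-algebraic set $D$ over $B$, with $D\cap B=D^\beta\cap B=\emptyset$, $e(D,c)=1$, and all remaining edges from $D$ to $B$ landing in $A_0$. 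Because $c\neq c^\beta$ forces $D\neq D^\beta$, the usual disjointness of distinct $0$-algebraic sets (Lemma~\ref{l:0alg}) gives $D\cap D^\beta=\emptyset$ and hence $e(B_0,DD^\beta)=2$. Combining with $(*)$,
\[\delta(B_0/DD^\beta A)=k-2(n-2)\leq 0\]
(as $n\geq 3$ and $k\leq n-1$), so $B_0\subseteq\acl(DD^\beta A)=\acl(DA)$, the equality using $D^\beta\subseteq\acl(DA)$ (each $x\in D$ has $x^\beta\in\acl(xA)$ by boundedness). In particular $b\in\acl(DA)$.

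The main obstacle is how to extract the contradiction. Mimicking the final step of Lemma~\ref{l:indbeg} verbatim would require showing $c\notin\acl(DA)$ via a strongness argument on $AcD$, but this breaks down because $c$ may have neighbours in $A$ or be itself algebraic over $Ab$, so that $\delta(c/DA)$ can easily fall to $0$ or below and the analogue of ``$AbD$ is $\leqs$-closed'' fails. The way out is to pivot to $b$: since $D$ is $0$-algebraic over $A_0 c\subseteq Ac$, we have $D\subseteq\acl(cA)$ and therefore $\acl(DA)\subseteq\acl(cA)$; combined with $b\in\acl(DA)$ this would force $b\in\acl(cA)$, contradicting the relation $d(b/cA)=1$ carved out above. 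The entire argument thus rests on having carried out the initial dimension reduction into a case where $b\notin\acl(cA)$ is genuinely available.
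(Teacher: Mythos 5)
Your reduction is exactly the paper's: assume $d(c/A)=k-1$ (else done by hypothesis), which forces $b\notin\acl(cA)$, and note $b^\beta=b$ so that $c^\beta\in D_1(b)\cap\acl(cA)$. At that point you have assembled everything needed for a one-line finish, but instead you launch into a re-run of the whole $0$-algebraic-set machinery from Lemma~\ref{l:indbeg}. The paper simply observes: if $c^\beta\neq c$, then $b$ is adjacent to both $c$ and $c^\beta$, so $e(b,cc^\beta A)\geq 2$, whence $\delta(b/cc^\beta A)\leq (n-1)-2(n-2)\leq 0$ for $n\geq 3$, so $b\in\acl(cc^\beta A)=\acl(cA)$, contradicting $b\notin\acl(cA)$. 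No base sets, no cycles $C_\ell$, no enlargement of $A$.

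Your long route is probably salvageable, but it carries real gaps that the short argument avoids. The enlargement of $A$ to an $\tilde A$ containing a fresh base set must preserve $d(b/c\tilde A)>0$; you assert this by ``placing $A_0$ far enough'' but never verify that a base set generic over $Abc$ actually leaves $d(b/c\tilde A)=1$. Likewise $e(B_0,DD^\beta)=2$ requires knowing that $D^\beta$ meets $B_0$ (not $B_0^\beta$) in exactly one vertex, and the invocation of Lemma~\ref{l:0alg} to get $D\cap D^\beta=\emptyset$ does not directly apply since $D^\beta$ is $0$-algebraic over $B^\beta$, not over $B$. These are exactly the sorts of issues the paper contends with in Lemma~\ref{l:indbeg} and that the present lemma is designed to sidestep: once $b$ is known to be fixed, two edges from $b$ into $cc^\beta A$ already give the algebraicity, and the heavy construction is superfluous.
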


\begin{proof}
If $d(c/A)\geq k$, then the result holds by assumption. Hence we may assume $d(c/A)=k-1$ and so 
$d(b/Ac)>0$, i.e.
$b\notin \acl(cA)$. If $c^\beta \neq c$, then $e(b,cc^\beta A)=2$ and $b\in \acl(cc^\beta A)=\acl(cA)$, a contradiction. 
\end{proof}

\begin{proposition}\label{p:nobounded}
 There is no non-trivial bounded automorphism.
\end{proposition}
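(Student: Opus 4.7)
I would proceed by downward induction on $k$ from $n-1$ to $0$, showing that $\beta$ fixes every $b \in \G_n$ with $d(b/A) \geq k$; reaching $k = 0$ gives $\beta = \mathrm{id}$. Before starting, as already observed in the paragraph preceding Lemma~\ref{l:indbeg}, I enlarge $A$ (using that $\beta$ bounded over $A$ is bounded over any larger set) to a finite $\leqs$-closed set containing a base set of each required type, and I also arrange that $A \supseteq A_0$ (as in Theorem~\ref{t:almstrmin}) so that the exchange property for $\acl$ is available.

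The base case $k = n-1$ is Lemma~\ref{l:indbeg} applied to each $b$ with $d(b/A) = n-1$: its hypothesis $A \leq Ab \leq \G_n$ is automatic, since $\delta(Ab) = \delta(A) + n - 1 = d(Ab)$ forces $Ab$ to be $\leqs$-closed.

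For the induction step $k \to k-1$, I pick $c$ with $d(c/A) = k - 1$ and look for a neighbor $b \in D_1(c)$ with $d(b/A) = k$; an application of Lemma~\ref{l:induction} then yields $c^\beta = c$. The identity $d(b/A) = d(c/A) + d(b/Ac) - d(c/Ab)$, together with the fact that both $d(b/Ac)$ and $d(c/Ab)$ are at most $1$ for a single vertex joined by one edge, shows that $d(b/A) = k$ is equivalent to $d(b/Ac) = 1$ (so $b \notin \acl(Ac)$) together with $d(c/Ab) = 0$ (so $c \in \acl(Ab)$). Strong minimality of $D_1(c)$ (Theorem~\ref{t:almstrmin}) guarantees that the first requirement is satisfied by all but finitely many neighbors of $c$.

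The main obstacle is producing a neighbor $b$ satisfying the second requirement $c \in \acl(Ab)$ alongside the first, which via the exchange property is tightly constrained: one is essentially asking for $b$ to be both algebraic over $Ac$ and generic over it. I expect this to be handled by case analysis: the edge case $d(c/A) = 0$ (i.e.\ $k = 1$) is automatic since then $c \in \acl(A) \subseteq \acl(Ab)$, exactly the configuration for which Lemma~\ref{l:induction}'s argument directly applies. In the remaining cases I would exploit the combinatorial toolkit of the paper---Remark~\ref{r:strongsubsets}, the explicit $0$-minimally algebraic cycle configurations of Lemma~\ref{l:enough0alg}, and the manufacturing construction in Corollary~\ref{c:enough0alg}---possibly together with a further enlargement of $A$ inside the induction step, to locate an appropriate neighbor $b$.
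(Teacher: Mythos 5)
Your skeleton matches the paper's: downward induction on $k$ from $n-1$ to $0$, base case from Lemma~\ref{l:indbeg}, inductive step reducing via Lemma~\ref{l:induction} to producing a neighbour $b$ of $c$ with $d(b/A)=d(c/A)+1$. Your dimension-identity analysis of what such a $b$ must satisfy ($d(b/Ac)=1$ and $d(c/Ab)=0$) is also correct. But the proposal then stops exactly where the real work lies: you dispose of the trivial case $c\in\acl(A)$ and for all other $k$ you gesture at ``the combinatorial toolkit'' and a ``possible further enlargement of $A$'' without giving a construction. That is the gap. The paper's construction is concrete and short: set $E=\cl(cA)$, take a new formal vertex $b$ adjacent only to $c$ in $E$ (so $\delta(bE)=\delta(E)+1$), and use the extension property of Theorem~\ref{t:limit} to strongly embed $Eb$ over $E$ into $\G_n$, obtaining $b'$ with $Eb'\leqslant\G_n$; one then checks $d(b'/A)=k+1$. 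Nothing in your writeup produces a vertex with the required properties.

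Your exchange worry is also a misreading of the situation and should be dropped rather than deferred. In $\G_n$ the $d$-closure $\acl$ is a pregeometry only on the strongly minimal set $D_1(x)$, not on all of $\G_n$; for vertices of different $d$-rank over $A$ exchange simply fails, and indeed it \emph{must} fail here: $c\in\acl(Ab)$, $c\notin\acl(A)$, $b\notin\acl(Ac)$ is the expected configuration when $d(b/A)=d(c/A)+1$ (the ``larger'' element $b$ algebraizes $c$, not conversely). Reading the exchange remark in Section~4 as a universal statement about all pairs $x,y\in\G_n$ and concluding that the required $b$ is ``tightly constrained'' to the point of near-impossibility is the wrong inference; if that inference were valid, it would contradict the proposition you are trying to prove, so the right reaction is to question the premise, not to defer. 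Once that is cleared up, the generic strongly embedded neighbour of $c$ over $\cl(cA)$ is exactly the $b$ you were looking for.
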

\begin{proof} 
Suppose $\beta\in\Aut(\G_n)$ is bounded over $A\leq\G_n$.
Then $\beta$ fixes any $c\in \G_n$ with $d(c/A)=n-1$ by Lemma~\ref{l:indbeg}.
Now  assume inductively that
$\beta$ fixes any $b\in\G_n$ with $d(b/A)>k$ and let $c\in\G_n$ with $d(c/A)=k\geq 0$. 
By Lemma~\ref{l:induction} it suffices to show that there is $b\in D_1(c)$ such that $d(b/A)=k+1$. To find such an element $b$ let $E=\cl(cA)$ and let $b$ be a
neighbour of $c$ such that $\delta(bE)=\delta(E)+1$. By the properties of 
$\G_n$ given in Theorem~\ref{t:limit} there is
a copy $b'$ of $b$ over $E$ strongly embedded into $\G_n$. Thus, $d(b'/E)=1$
and hence $d(b'/A)=k+1$.
\end{proof}

%\section{Proof of the main theorem}

By  Theorem~\ref{t:Lascar} we now have:
\begin{cor}
 $\Aut_\gamma(\G_n)$ is a simple group.
\end{cor}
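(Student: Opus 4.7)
The plan is to derive this corollary essentially as a one-line consequence of the two principal results already in hand: Lascar's Theorem~\ref{t:Lascar} and Proposition~\ref{p:nobounded}. Lascar's theorem tells us that the quotient $\Aut_\gamma(\G_n)/\Bdd_\gamma(\G_n)$ is simple, while Proposition~\ref{p:nobounded} asserts that there is no non-trivial bounded automorphism of $\G_n$ at all. So the strategy is simply to observe that the denominator in Lascar's quotient is trivial.

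More precisely, I would first note that $\Bdd_\gamma(\G_n) \subseteq \Bdd(\G_n)$: by definition, an element of $\Bdd_\gamma(\G_n)$ is an element of $\Aut_\gamma(\G_n)$ that is bounded over some finite set containing $\gamma$, hence in particular a bounded automorphism of $\G_n$. By Proposition~\ref{p:nobounded} the group $\Bdd(\G_n)$ is trivial, so $\Bdd_\gamma(\G_n) = \{1\}$.

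Consequently, Theorem~\ref{t:Lascar} yields that
\[
\Aut_\gamma(\G_n) \;\cong\; \Aut_\gamma(\G_n)/\Bdd_\gamma(\G_n)
\]
is a simple group, which is precisely the assertion of the corollary.

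There is no real obstacle left at this stage: the technical core of the argument is Proposition~\ref{p:nobounded}, whose proof occupies the preceding section and ultimately rests on the abundance of $0$-algebraic cycles provided by Corollary~\ref{c:enough0alg} together with the inductive descent in $d(\cdot/A)$ given by Lemmas~\ref{l:indbeg} and~\ref{l:induction}. Once that proposition is granted, the corollary is immediate.
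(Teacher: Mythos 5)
Your argument is correct and is exactly the paper's (implicit) reasoning: Proposition~\ref{p:nobounded} forces $\Bdd_\gamma(\G_n)\subseteq\Bdd(\G_n)=\{1\}$, so the quotient in Lascar's Theorem~\ref{t:Lascar} is $\Aut_\gamma(\G_n)$ itself.
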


Theorem~\ref{t:simple} now follows from:
\begin{prop}
If $\Aut_\gamma(\G_n)$ is simple, then so is $\Aut(\G_n)$.
\end{prop}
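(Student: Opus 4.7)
The plan is: given a nontrivial normal subgroup $N \trianglelefteq G := \Aut(\G_n)$, show $N = G$ by first proving $\Aut_\gamma \subseteq N$ via a commutator argument whose key input is Proposition~\ref{p:nobounded}, then bootstrapping to $N = G$ by invoking the homogeneity from Theorem~\ref{t:limit}.

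For the first step, take $g \in N \setminus \{1\}$ and observe that for any $h \in \Aut_{\gamma \cup g(\gamma)}$ the commutator $[g, h] = g^{-1}h^{-1}gh$ lies in $N$ (by normality) and fixes $\gamma$ pointwise: if $v \in \gamma$, then $h$ fixes both $v \in \gamma$ and $g(v) \in g(\gamma)$, so $[g,h](v) = v$. If some choice of $h$ yields $[g, h] \neq 1$, then $N \cap \Aut_\gamma$ is a nontrivial normal subgroup of $\Aut_\gamma$, and simplicity of $\Aut_\gamma$ immediately gives $\Aut_\gamma \subseteq N$. If no such $h$ exists, then $g$ centralises $\Aut_A$ for $A := \gamma \cup g(\gamma)$; but then for every $v \in \G_n$ and every $h \in \Aut_{A \cup \{v\}}$ one computes $h(g(v)) = g(h(v)) = g(v)$, so $g(v)$ is fixed by every element of $\Aut_{A\cup\{v\}}$ and hence $g(v) \in \acl(Av)$. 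This says $g$ is bounded over $A$, contradicting Proposition~\ref{p:nobounded}.

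For the second step, given $f \in G$ I would produce $h \in N$ with $h|_\gamma = f|_\gamma$, so that $h^{-1}f \in \Aut_\gamma \subseteq N$ forces $f \in N$. To construct $h$, pick a copy $\gamma'$ of $\gamma$ at graph-distance large enough from $\gamma \cup f(\gamma)$ that no vertex of $\G_n$ has neighbours in more than one of $\gamma$, $f(\gamma)$, $\gamma'$; then both $\gamma \sqcup \gamma'$ and $f(\gamma) \sqcup \gamma'$ are disjoint unions, strongly embedded in $\G_n$. The partial isomorphism fixing $\gamma'$ pointwise and acting as $f|_\gamma$ on $\gamma$ then extends, by Theorem~\ref{t:limit}, to some $h \in \Aut_{\gamma'}$; since $\Aut_{\gamma'}$ is a $G$-conjugate of $\Aut_\gamma$ and $N$ is normal, $\Aut_{\gamma'} \subseteq N$ and so $h \in N$ as required. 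The hard part will be the centraliser-to-boundedness step in the first half, since this is precisely where the content of Proposition~\ref{p:nobounded} is genuinely used.
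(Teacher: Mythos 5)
Your proof takes a genuinely different route from the paper's. The paper proceeds via the chain of stabilizers $\Aut_{\gamma_{n+1}}(\G_n)\subseteq\cdots\subseteq\Aut_{\gamma_0}(\G_n)=\Aut_{x_0}(\G_n)\subseteq\Aut(\G_n)$, propagating simplicity down the chain using $3$-transitivity and maximality of each stabilizer in the next, and finally using the BN-pair structure to pass from $\Aut_{x_0}(\G_n)$ to $\Aut(\G_n)$. Your argument instead reduces directly to Proposition~\ref{p:nobounded} via a commutator trick: a nontrivial $g\in N$ either fails to centralize $\Aut_{\gamma\cup g(\gamma)}(\G_n)$, yielding a nontrivial element of $N\cap\Aut_\gamma(\G_n)$, or does centralize it, in which case $g(v)$ is fixed by $\Aut_{A\cup\{v\}}(\G_n)$ for every $v$ and is therefore bounded. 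This is cleaner in that it bypasses the $3$-transitivity and BN-pair arguments entirely; the step ``$g(v)$ fixed by $\Aut_{Av}(\G_n)$ implies $g(v)\in\acl(Av)$'' is correct, though it does lean on $\omega$-saturation of $\G_n$ and the standard identification of the paper's $d$-closure with model-theoretic algebraic closure, which you should at least cite.

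Your second step, however, contains a genuine gap. You propose to ``pick a copy $\gamma'$ of $\gamma$ at graph-distance large enough from $\gamma\cup f(\gamma)$ that no vertex of $\G_n$ has neighbours in more than one of $\gamma$, $f(\gamma)$, $\gamma'$'' and conclude that $\gamma\sqcup\gamma'$ and $f(\gamma)\sqcup\gamma'$ are strongly embedded. But $\G_n$ is a generalized $n$-gon of diameter $n$, so there is no such thing as ``graph-distance large''; and more to the point, absence of edges or common neighbours between $\gamma$ and $\gamma'$ does \emph{not} guarantee that $\gamma\cup\gamma'$ is strongly embedded: longer cycles or $0$-algebraic configurations passing through both could push $\delta$ below $\delta(\gamma)+\delta(\gamma')$ on some intermediate set. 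What you actually need is that $\gamma'$ be $d$-independent from $\gamma\cup f(\gamma)$ over $\emptyset$, i.e.\ that $\gamma'$ realize the free amalgam over $\cl(\gamma\cup f(\gamma))$. Such a $\gamma'$ does exist by Theorem~\ref{t:limit}: embed a disjoint free copy of $\gamma$ over the finite strong set $E:=\cl(\gamma\cup f(\gamma))$ so that $E\gamma'\leqs\G_n$ with no edges between $E$ and $\gamma'$; then $\gamma\gamma'\leqs E\gamma'\leqs\G_n$ and $f(\gamma)\gamma'\leqs E\gamma'\leqs\G_n$, and the partial isomorphism sending $\gamma\mapsto f(\gamma)$ and fixing $\gamma'$ extends by Theorem~\ref{t:limit}. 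With this correction your second step, and hence the whole argument, goes through.
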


\begin{proof}
Write $\gamma=(x_0,\ldots, x_n, x_{n+1})$ where
$(x_0,\ldots, x_n)$ is a path of length $n$ and $x_{n+1}\in D_1(x_{n-1})\setminus\{x_n,x_{n-2}\}$.
For $0\leq i\leq n+1$ let $\gamma_i=(x_0,\ldots, x_i)$. Then $\delta(\gamma_i)=n-1+i$, so each $\gamma_i$ is strongly embedded into $\Gamma_n$  by Remark~\ref{r:strongsubsets}.

We now prove for $i=0,\ldots n$, if  $\Aut_{\gamma_{i+1}}(\G_n)$ is simple, then so is $\Aut{\gamma_i}(\G_n)$.
Note that for $i=0,\ldots n-1$, by Remark~\ref{r:strongsubsets} and the homogeneity of $\G_n$ for
strong subsets the group $\Aut_{\gamma_i}(\G_n)$ acts $3$-transitively on the set
of neighbours of $x_i$ different from $x_{i-1}$.
For the same reason,  $\Aut_{\gamma_n}(\G_n)$ acts $3$-transitively on the set
of neighbours of $x_{n-1}$ different from $x_n,x_{n-2}$.
 Therefore $\Aut_{\gamma_{i+1}}(\G_n)$
is a maximal subgroup 
of $\Aut{\gamma_i}(\G_n)$. 

Now let $1\neq N\lhd \Aut{\gamma_i}(\G_n)$.  
Since any normal subgroup of a 
$3$-transitive group acts $2$-transitively,
we have $N\cap \Aut_{\gamma_{i+1}}(\G_n)\neq 1$. 
Since
$\Aut_{\gamma_{i+1}}(\G_n)$ is simple, maximal and not normal in  $\Aut{\gamma_i}(\G_n)$, this shows that $N=\Aut{\gamma_i}(\G_n)$.
This shows inductively that 
$\Aut_{\gamma_0}(\G_n)=\Aut_{x_0}(\G_n)$ is simple.

It is left to show that $\Aut(\G_n)$ is simple given that $\Aut_{x_0}(\G_n)$ is.
Since $\Aut(\G_n)$ has a BN-pair,
we know that $\Aut_{x_0}(\G_n)$ is a maximal subgroup and any normal
subgroup of  $\Aut(\G_n)$ acts transitively on the set of vertices of
a given type.
Suppose there is a normal subgroup
$1\neq N\lhd \Aut(\G_n)$ with $N\cap \Aut_{x_0}(\G_n)=1$.
Then $N$ must act regularly on vertices of the same type as $x_0$.
We show that this is impossible:
choose $z_0\in\G_n$ with $\dist(z_0,x_0)=2$ and let $g\in N$ with
$x_0^g=z_0$.  Let $(x_0,z_1,z_0)$ be a path of length $2$ and let 
$a_1,a_2\in D_1(x_0)\setminus\{z_1\}$ and such that $a_i^g\neq z_1,i=1,2$.
Let $b_i=a_i^g\in D_1(z_0)$. 
Then for $i=1,2$ we have $\delta(a_i,x_0,z_1,z_0,b_1,b_2)=n+4\leq 2n+1$.
By Remark \ref{r:strongsubsets} there exists some $h\in\Aut(\G_n)$ fixing $(x_0,z_1,z_0,b_1,b_2)$
with $a_2^h=a_1$. Then $g^h\in N$ with $x_0^{g^h}=z_0$, but
$a_1^{g^h}=b_2\neq b_1$ showing that $N$ is not regular.
\end{proof}

This finishes the proof of Theorem~\ref{t:simple}. That these
BN-pairs are not of algebraic origin follows from the
fact that in contrast to the examples studied here
the classification of Moufang polygons by Tits and
Weiss \cite{TW} implies that in the algebraic case 
no point stabilizer $G_x$ 
acts $6$-transitively on $D_1(x)$, see Remark~\ref{r:strongsubsets}.

\bibliographystyle{siam}
\bibliography{Bounded}

\end{document}